\patchcmd\Gread@eps{\@inputcheck#1 }{\@inputcheck"#1"\relax}{}{}
\newtheorem{thm}{Theorem}
\newtheorem{lem}[thm]{Lemma}
\newtheorem{conj}[thm]{Conjecture}
\newtheorem{rem}{Remark}
\begin{document}

\title{Strongly connected orientation with minimum lexicographic order of indegrees
\thanks{The work was supported by NNSF of China (No. 11671376) and  NSF of Anhui Province (No. 1708085MA18) and Anhui Initiative in Quantum Information Technologies (AHY150200).}
}
\author{Hongyu Zhou$^a$, \quad Xinmin Hou$^b$\\
\small $^{a,b}$ Key Laboratory of Wu Wen-Tsun Mathematics\\
\small School of Mathematical Sciences\\
\small University of Science and Technology of China\\
\small Hefei, Anhui 230026, China.
}

\date{}

\maketitle

\begin{abstract}
Given a simple undirected graph $G$, an orientation of $G$ is to assign every edge of $G$ a direction. Borradaile et al gave a greedy algorithm SC-Path-Reversal (in polynomial time)  which finds a strongly connected orientation that minimizes the maximum indegree, and conjectured that SC-Path-Reversal is indeed optimal for the "minimizing the lexicographic order" objective as well. In this note, we give a positive answer to the conjecture, that is we show that the algorithm SC-PATH-REVERSAL finds a strongly connected orientation that minimizes the lexicographic order of indegrees.
\end{abstract}

\section{Introduction}
Graph orientation has long been studied and is a rich field under different conditions. In this note we mainly concern about the strongly-connected orientation with minimum lexicographic order. This objective arises from a telecommunication network design problem~\cite{Gb17,Ven04}.
Let $G=(V, E)$ be an undirected simple graph.
An {\it orientation} $\Lambda$ of $G$ is an assignment to each edge a direction. By a {\it strongly-connected orientation}, we mean the digraph that we obtain is strongly-connected. In a digraph $D=(V,\Lambda)$,
the {\it indegree} of a vertex $v$ is the number of arcs that are directed to $v$, denoted by $d^-_\Lambda(v)$.
The {\it indegree sequence} of a digraph (or an orientation) is defined as a non-increasing sequence of the indegrees of all the vertices, that is, we place the indegrees of vertices in a non-increasing order. To compare distinct indegree sequences of two orientations of an undirected graph, we apply the lexicographic order, i.e. let
$s=(s_1, s_2,\ldots,s_n)$ and $t=(t_1, t_2,\ldots, t_n)$ be the indegree sequences of two distinct orientations of an undirected graph, respectively, we say $s$ is smaller than $t$ if there exists an integer $k$ with  $1\le k\le n$ such that $s_k<t_k$ and $s_i=t_i$ for all $i<k$, and vise versa.



Let  $D=(V,\Lambda)$ be a digraph and $u,v\in V$, we say $u$ {\it two-reaches} to $v$ (or $v$ is {\it two-reachable} from $u$) if there are two arc-disjoint directed paths from $u$ to $v$ in $D$.
A directed path from $u$ to $v$ is called {\it reversible} if $d^-(u)<d^-(v)-1$, and is called {\it strongly reversible} if $d^-(u)<d^-(v)-1$ and $u$ two-reaches $v$ in $D$.

The following greedy algorithm was first given by de Fraysseix and de Mendez~\cite{FM95}.
\begin{algorithm}\label{ALG: PR}
\small
\SetAlgoLined
\KwIn{Undirected simple graph $G$}
\KwOut{Orientation $\Lambda$}

Arbitrarily orient every edge of $G$.

While there is a reversible path, reverse it.

Repeat step 2.
\caption{PATH-REVERSAL}
\end{algorithm}
It has been shown that the algorithm finds an orientation that minimizes the maximum indegree, which is proved by Venkateswaran~\cite{Ven04},  Asahiro et al.~\cite{AMOZ07}, and de Fraysseix and de Mendez~\cite{FM95}, respectively. In fact, Path-Reversal can do something more, Borradaile et al.~\cite{Gb17} showed that Path-Reversal indeed finds an orientation that minimizes the lexicographic order of the indegrees.

\begin{lem}[Lemma 2 in~\cite{Gb17}]\label{LEM: l1}
Reversing a directed path from $u$ to $v$ maintains the strong connectivity if and only if $u$ two-reaches $v$. Particularly, $v$ two-reaches $v$ itself.
\end{lem}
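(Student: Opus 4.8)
The plan is to reduce strong connectivity of the reversed digraph to a cut condition and then invoke the arc-version of Menger's theorem. Fix a strongly connected digraph $D=(V,\Lambda)$, a directed path $P$ from $u$ to $v$, and let $D'$ be the digraph obtained from $D$ by reversing every arc of $P$. First I would record the elementary fact that a digraph $H$ on vertex set $V$ is strongly connected if and only if every nonempty proper subset $S\subsetneq V$ has at least one arc directed from $S$ to $V\setminus S$. One direction is immediate; for the other, given any vertex $x$ let $R$ be the set of vertices reachable from $x$ in $H$; then $x\in R$, and if $R\ne V$ then no arc leaves $R$, a contradiction, so $R=V$ for every $x$.

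Next I would analyze, for each fixed nonempty proper $S\subsetneq V$, how reversing $P$ changes the number of arcs leaving $S$. Only the arcs of $P$ are affected, and by telescoping along $P$ the number of its arcs crossing from $S$ to $V\setminus S$ minus the number crossing from $V\setminus S$ to $S$ equals $+1$ if $u\in S$ and $v\notin S$, equals $-1$ if $v\in S$ and $u\notin S$, and equals $0$ in all other cases. Reversing $P$ interchanges the two crossing types, so the number of arcs leaving $S$ decreases by exactly $1$ when $u\in S$ and $v\notin S$, increases by exactly $1$ when $v\in S$ and $u\notin S$, and is unchanged otherwise. Since $D$ is strongly connected, every nonempty proper $S$ has at least one leaving arc in $D$; hence in $D'$ the only sets that can fail to have a leaving arc are the sets $S$ with $u\in S$, $v\notin S$, and exactly one arc leaving $S$ in $D$. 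Combining with the fact above, $D'$ is strongly connected if and only if every vertex set $S$ with $u\in S$ and $v\notin S$ has at least two arcs directed from $S$ to $V\setminus S$ in $D$.

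Finally I would apply the arc-disjoint version of Menger's theorem: for $u\ne v$, the maximum number of arc-disjoint directed $u$--$v$ paths in $D$ equals the minimum, over all $S$ with $u\in S$ and $v\notin S$, of the number of arcs directed from $S$ to $V\setminus S$. Thus the cut condition derived above is exactly the statement that $D$ contains two arc-disjoint directed paths from $u$ to $v$, i.e.\ that $u$ two-reaches $v$, which completes the equivalence. For the degenerate case $u=v$ the path $P$ is trivial and $D'=D$, so strong connectivity is trivially preserved; correspondingly there is no set $S$ with $v\in S$ and $v\notin S$, so the cut condition holds vacuously, and the equivalence reads off that $v$ two-reaches $v$.

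The only delicate point is the crossing-count bookkeeping in the middle paragraph, and keeping the direction conventions straight (``two-reaches'' refers to directed paths \emph{from} $u$ \emph{to} $v$, and the relevant cuts are precisely those with $u$ on the ``tail'' side of the cut); the reductions to the cut condition and from the cut condition to Menger's theorem are then routine.
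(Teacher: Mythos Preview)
Your argument is correct. The reduction of strong connectivity to the out-cut condition, the telescoping count of how reversing $P$ shifts $|\delta^+(S)|$ by $\mathbf 1_{S}(v)-\mathbf 1_{S}(u)$, and the final appeal to the arc version of Menger's theorem are all sound, and the degenerate case $u=v$ is handled cleanly.

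There is nothing to compare against here: the present paper does not prove this lemma at all. It is quoted verbatim from Borradaile et al.\ (as ``Lemma~2 in~\cite{Gb17}'') and used as a black box; the appendix only supplies a proof of Lemma~\ref{LEM: l3}. So your proposal is not an alternative to the paper's proof but rather a self-contained justification of a result the paper takes for granted. As such it is a clean and standard route: the cut/Menger formulation is exactly the right language, since ``maintaining strong connectivity after a single path reversal'' is a statement about which directed cuts can drop to size zero, and the only cuts that lose an arc are the $(u,\overline v)$-cuts.

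Two minor remarks. First, you implicitly use that $P$ has no repeated arcs so that ``reverse every arc of $P$'' is well defined and the telescoping matches the actual change in $|\delta^+(S)|$; this is automatic for a directed path but worth a one-line comment. Second, your final sentence in the $u=v$ case slightly overstates what the equivalence gives: you do not need the equivalence to see that $v$ two-reaches $v$, since the two ``paths'' are the trivial length-zero paths, which are vacuously arc-disjoint. The lemma's clause ``Particularly, $v$ two-reaches $v$ itself'' is just recording this convention.
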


By Lemma~\ref{LEM: l1}, we know that reversing a strongly reversible direct path remains the resulting digraph strongly-connected. Based on PATH-REVERSAL, Borradaile et al.~\cite{Gb17} gave a modified version of it as shown in the following.


\begin{algorithm}\label{ALG: SC_PR}
\small
\SetAlgoLined
\KwIn{Undirected simple graph $G$ admitting a strongly connected orientation}
\KwOut{Orientation $\Lambda$}

Find an arbitrary strongly-connected orientation.

If there exists a strongly reversible path, reverse it.

Repeat step 2.
\caption{SC-PATH-REVERSAL}
\end{algorithm}

Borradaile et al ~\cite{Gb17} showed that the algorithm SC-Path-Reversal finds (in polynomial time) a strongly connected
orientation that minimizes the maximum indegree, and conjectured that SC-Path-Reversal is indeed optimal for the the "minimizing the lexicographic order" objective as well.
\begin{conj}[Borradaile et al ~\cite{Gb17}]\label{CONJ: c1}
The algorithm SC-Path-Reversal finds a strongly connected orientation that minimizes the lexicographic order of indegrees.
 \end{conj}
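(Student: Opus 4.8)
The plan is to prove the (essentially equivalent) statement: \emph{every strongly connected orientation $\Lambda$ with no strongly reversible path already has the lex-minimal indegree sequence among all strongly connected orientations}. Together with a termination argument this gives the conjecture, since SC-PATH-REVERSAL stops precisely when no strongly reversible path remains. Termination is easy: reversing a reversible path from $u$ to $v$ turns $(d^-(u),d^-(v))$ into $(d^-(u)+1,d^-(v)-1)$ with $d^-(u)<d^-(v)-1$, so the nonnegative integer $\Phi(\Lambda)=\sum_{w}\binom{d^-_\Lambda(w)}{2}$ strictly decreases at every reversal; and by Lemma~\ref{LEM: l1} reversing a strongly reversible path keeps the digraph strongly connected, so the algorithm is well defined and halts.

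For the main statement I would argue by contradiction. Suppose $\Lambda$ is strongly connected, has no strongly reversible path, but is not lex-minimal; pick a lex-minimal strongly connected orientation $\Lambda^{*}$ (its indegree sequence is then strictly smaller than $\Lambda$'s, and by Lemma~\ref{LEM: l1} it too has no strongly reversible path), and among all lex-minimal choices minimize the number $|H|$ of edges on which $\Lambda$ and $\Lambda^{*}$ disagree; orient $H$ as in $\Lambda$. First I would show $H$ is acyclic: a directed cycle $C$ of $H$ appears reversed as a directed cycle of $\Lambda^{*}$, and reversing a directed cycle of a strongly connected digraph preserves strong connectivity (route any path to/from $C$ through the first/last vertex of $C$ it meets, and note the reversed cycle still strongly connects its own vertices) while leaving every indegree fixed, contradicting minimality of $|H|$. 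So $H$ is a DAG, and I would decompose its arcs into directed paths $P_1,\dots,P_r$ (with $P_j$ from $s_j$ to $t_j$) \emph{conformally}, i.e.\ so that $d^-_\Lambda(s_j)<d^-_{\Lambda^{*}}(s_j)$ and $d^-_\Lambda(t_j)>d^-_{\Lambda^{*}}(t_j)$ for all $j$; this is achieved by pairing, at each vertex whose in- and out-degrees in $H$ are equal, its $H$-in-arcs with its $H$-out-arcs as pass-throughs (acyclicity forbids closed traces).

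The core of the argument is to extract from this picture a strongly reversible path, which then contradicts the choice of $\Lambda$ (or of $\Lambda^{*}$). Let $q$ be the largest value of $d^-_\Lambda$ attained at a vertex whose indegree strictly drops in $\Lambda^{*}$; every such vertex has $d^-_\Lambda\le q$, so $\{w:d^-_\Lambda(w)>q\}\subseteq\{w:d^-_{\Lambda^{*}}(w)>q\}$, and there is a conformal path $P_j$ ending at a vertex $t$ with $d^-_\Lambda(t)=q>d^-_{\Lambda^{*}}(t)$. Using lex-minimality of $\Lambda^{*}$, the equality of the sets of high-indegree vertices, and — if the naive choice only yields a gap of one — a re-routing of the decomposition (which does not change $H$) or passage to the reverse path inside $\Lambda^{*}$, I would produce a directed path $P$ whose endpoints $u,v$ satisfy $d^-(u)\le d^-(v)-2$ in the relevant orientation. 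The hard part, and the step I expect to be the real obstacle, is upgrading such a reversible path to a strongly reversible one: if $u$ does not two-reach $v$ then by Lemma~\ref{LEM: l1} there is a set $R$ with $u\in R$, $v\notin R$ having a unique leaving arc, and that arc lies on $P$; since the other orientation is strongly connected, it too must leave $R$, which forces $H$ to contain an arc entering $R$. To close this I would exploit that the family of vertex sets with exactly one leaving arc is closed under union and intersection of crossing members (submodularity of the cut function), so the obstructions coming from the reversibility structure of $\Lambda$ can be uncrossed; combined with the minimality of $|H|$ this should either contradict the choice of $\Lambda^{*}$ or let one shorten $P$ while keeping it reversible, the iteration terminating at a genuinely strongly reversible path and yielding the desired contradiction.
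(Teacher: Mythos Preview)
Your outline is sound through the decomposition of the symmetric difference $H$ into conformal paths, but the two steps you yourself flag as difficult are genuine gaps that you do not close. First, producing a path with endpoints differing by at least two: the inclusion $\{d^-_\Lambda>q\}\subseteq\{d^-_{\Lambda^{*}}>q\}$ is correct, but lexicographic order on non-increasing sequences does \emph{not} translate into the level-set comparison you need to force $d^-_\Lambda(s_j)\le q-2$, and the phrases ``re-routing of the decomposition'' and ``passage to the reverse path inside $\Lambda^{*}$'' are not arguments. Second, and more seriously, your plan for upgrading a merely reversible path to a strongly reversible one is only a hope: uncrossing the one-arc-out sets shows they form a laminar family, but neither that nor minimality of $|H|$ visibly produces a vertex that two-reaches $v$ with the required indegree gap; you have identified the obstacle without removing it.

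The paper's proof never tries to manufacture two-reachability after the fact, and this is the key idea you are missing. One picks $v$ with $d^-_{lex}(v)\ne d^-_{pr}(v)$ maximizing first $d^-_{lex}$ and then $d^-_{pr}$, and takes $U$ to be the set of vertices two-reaching $v$ in whichever of the two orientations gives $v$ the larger indegree. The structural Lemma~\ref{LEM: l3} says each component of $G[V\setminus U]$ sends \emph{exactly one} arc into $U$, so $\sum_{u\in U}d^-(u)=|E(G[U])|+c(G[V\setminus U])$ in that orientation, while any strongly connected orientation gives at least this sum. Comparing sums over $U$ immediately yields $w\in U$ whose indegree moves opposite to $v$'s; the extremal choice of $v$ then forces the gap to be at least two in the relevant orientation (directly in $D_{pr}$, or after one more reversal that preserves lex-minimality but decreases $\sum_x|d^-_{lex}(x)-d^-_{pr}(x)|$ in $D_{lex}$). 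Since $w\in U$ by construction, $w$ already two-reaches $v$, so the path is automatically strongly reversible. This counting identity for the two-reach set replaces your entire symmetric-difference and uncrossing programme.
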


In this note, we give a positive answer to Conjecture~\ref{CONJ: c1}. The following is our main theorem.

\begin{thm}\label{THM: main}
The algorithm SC-PATH-REVERSAL  finds an orientation that minimizes the lexicographic order of the indegrees.
\end{thm}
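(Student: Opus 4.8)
The plan is to show that the orientation $\Lambda$ returned by SC-PATH-REVERSAL, which by construction is strongly connected and admits no strongly reversible path, is lexicographically minimum; equivalently (in contrapositive form), I will show that any strongly connected orientation that is \emph{not} lexicographically minimum admits a strongly reversible path. First I would record that the algorithm terminates: reversing a directed path from $u$ to $v$ with $d^-(u)<d^-(v)-1$ changes $\sum_w (d^-_\Lambda(w))^2$ by $2\bigl(d^-(u)-d^-(v)+1\bigr)\le -2$, so this nonnegative potential strictly decreases, while strong connectivity is preserved by Lemma~\ref{LEM: l1} because $u$ two-reaches $v$. It is convenient to translate the lexicographic comparison into threshold counts: writing $N_{\ge j}(\Lambda)=|\{w:d^-_\Lambda(w)\ge j\}|$, the sorted indegree sequence of one orientation is lexicographically smaller than that of another precisely when, at the largest $j$ at which these counts differ, the first has the smaller count.

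Now suppose $\Lambda$ is strongly connected but not lexicographically minimum, and let $\Lambda^{*}$ be a lexicographically minimum strongly connected orientation chosen so as to minimize the number of edges on which it disagrees with $\Lambda$; let $D$ be the set of those edges and $\vec D$ the subdigraph consisting of the edges of $D$ oriented as in $\Lambda$. For each vertex $w$ one has $f(w):=d^-_\Lambda(w)-d^-_{\Lambda^{*}}(w)=\operatorname{indeg}_{\vec D}(w)-\operatorname{outdeg}_{\vec D}(w)$, with $\sum_w f(w)=0$. If $\vec D$ contained a directed cycle $C$, then the same edges form a directed cycle in $\Lambda^{*}$ as well, and reversing it in $\Lambda^{*}$ would leave all indegrees unchanged (so still lexicographically minimum), would keep the orientation strongly connected by Lemma~\ref{LEM: l1} (any vertex two-reaches itself), and would agree with $\Lambda$ on strictly more edges --- contradicting the choice of $\Lambda^{*}$. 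Hence $\vec D$ is an edge-disjoint union of directed paths $P_1,\dots,P_r$, where (after passing to a clean decomposition) each $P_i$ runs from a vertex $a_i$ with $f(a_i)<0$ to a vertex $b_i$ with $f(b_i)>0$, and every vertex is the start of some $P_i$, or the end of some $P_i$, or neither.

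The next step is to locate a path $P_i$ with a large ``degree gap''. I would prove the auxiliary fact that reversing the $P_i$ in \emph{any} order turns $\Lambda$ into $\Lambda^{*}$ through intermediate orientations whose sorted indegree sequences are nondecreasing in lexicographic order, \emph{provided} every $P_i$ satisfies $d^-_\Lambda(b_i)-d^-_\Lambda(a_i)\le 1$. Indeed, since reversing a path only raises the indegree of its start (a source) and lowers the indegree of its end (a sink) while leaving internal vertices fixed, at the moment $P_i$ is reversed the current indegrees still satisfy $d^-(b_i)-d^-(a_i)\le d^-_\Lambda(b_i)-d^-_\Lambda(a_i)\le 1$; and a single reversal of a path from $a$ to $b$ with current $d^-(b)\le d^-(a)+1$ either leaves the indegree multiset unchanged (when $d^-(b)=d^-(a)+1$) or inserts a value one above the current value of $a$ (while removing and inserting values that are not larger), which only increases the relevant threshold count at the top --- so the sorted sequence does not decrease lexicographically. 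Since $\Lambda^{*}<_{\mathrm{lex}}\Lambda$, it follows that not all $P_i$ have gap at most $1$; fix one, say $P^{*}$, from $a$ to $b$ with $d^-_\Lambda(a)\le d^-_\Lambda(b)-2$.

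Finally I must show that $a$ two-reaches $b$ in $\Lambda$, for then $P^{*}$ is a strongly reversible path in $\Lambda$, the desired contradiction; this is the main obstacle. Suppose not; by Menger's theorem there is a set $S$ with $a\in S$, $b\notin S$ and exactly one arc $e=(x,y)$ of $\Lambda$ leaving $S$. Since $P^{*}$ goes from $S$ to its complement it uses $e$, so $e\in D$ and $e$ is in fact the unique arc of $\vec D$ leaving $S$. Strong connectivity of $\Lambda^{*}$ forces some arc of $\Lambda^{*}$ to leave $S$; as the only $\Lambda$-arc leaving $S$ is $e\in D$, this arc arises by reversing some $D$-arc that enters $S$ in $\Lambda$, so $\vec D$ has an arc entering $S$, which gives $\sum_{v\in S}f(v)\ge 0$, i.e. $\sum_{v\in S}d^-_\Lambda(v)\ge\sum_{v\in S}d^-_{\Lambda^{*}}(v)$. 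I would then combine this inequality with the gap $d^-_\Lambda(a)\le d^-_\Lambda(b)-2$ and a minimal choice of the cut $S$ (or of $P^{*}$), analysing the path of $\vec D$ that enters $S$ --- which, having no way out, must terminate inside $S$ --- so as to produce either a smaller counterexample (contracting $V\setminus S$ to a single vertex) or a strongly reversible pair lying entirely inside $S$ or entirely outside it. I expect this reachability step, i.e.\ reconciling the degree-gap requirement with two-reachability across one-arc cuts, to be the technically hardest part, while the absence of directed cycles in $\vec D$ and the gap-at-most-one lemma should be routine.
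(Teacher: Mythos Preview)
Your contrapositive reformulation is legitimate, and Steps 1--6 are sound: the potential argument for termination, the acyclicity of $\vec D$ via the tiebreaking choice of $\Lambda^{*}$, the path decomposition, and the ``gap $\le 1$'' lemma are all correct. The genuine gap is Step~7. You have produced a directed path $P^{*}$ in $\Lambda$ from $a$ to $b$ with $d^-_\Lambda(a)\le d^-_\Lambda(b)-2$, but nothing in your construction forces $a$ to two-reach $b$: $P^{*}$ was selected only for its degree gap, and there is no reason this particular pair should admit two arc-disjoint $a\to b$ paths. Your Menger/cut analysis correctly gives $\sum_{v\in S}f(v)\ge 0$, but that only locates some other source inside $S$; it neither certifies two-reachability of $a$ to $b$ nor hands you a new pair enjoying both a large gap and two-reachability. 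The contraction/induction you gesture at is not carried out, and it is unclear how it could be, since contracting $V\setminus S$ destroys the strong-connectivity and degree information you need on the other side.

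The paper's proof reverses the order of the two requirements, and this is the missing idea. Rather than finding a large-gap pair and then struggling to certify two-reachability, it fixes a vertex $v$ with maximum $d^-_{lex}$ over the disagreement set $S=\{w:d^-_{lex}(w)\ne d^-_{pr}(w)\}$ (ties broken by maximum $d^-_{pr}$), and takes $U$ to be the set of vertices that two-reach $v$ (in $D_{pr}$ if $d^-_{lex}(v)<d^-_{pr}(v)$, in $D_{lex}$ otherwise). Lemma~\ref{LEM: l3} says each component of $G[V\setminus U]$ sends exactly one arc into $U$, so $\sum_{u\in U}d^-(u)=|E(G[U])|+c(G[V\setminus U])$ in that orientation, while strong connectivity of the other orientation gives at least this many incoming arcs and hence the reverse inequality on $\sum_{u\in U}d^-(u)$. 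Comparing the two sums yields a $w\in U$ with the opposite discrepancy; two-reachability of $w$ to $v$ is then automatic from the definition of $U$, and the extremal choice of $v$ forces the gap to be at least $2$. Your disagreement-digraph decomposition is not needed; the two-reach set together with Lemma~\ref{LEM: l3} does all the work.
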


In the rest of the note, we give the proof of Theorem~\ref{THM: main}.


\section{Proof of Theorem~\ref{THM: main}}
Before proving that the orientation output has the minimum lexicographic order, we introduce two lemmas given in~\cite{Gb17}.

\begin{lem}[Lemma 2 in~\cite{Gb17}]\label{LEM: l2}
In a digraph, let $s$ and $t$ be two vertices (can be identical) that 2-reach a vertex $v$. If there exists a vertex $u$ such that one $u\rightarrow s$ and one $u\rightarrow t$ paths are arc-disjoint, then $u$ two-reaches $v$.
\end{lem}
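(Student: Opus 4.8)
The plan is to prove the contrapositive using the arc version of Menger's theorem for directed graphs. Recall that $u$ two-reaches $v$ exactly when there are at least two arc-disjoint directed $u\to v$ paths, which by Menger's theorem is equivalent to the statement that no single arc forms a $u$-$v$ cut (no single arc whose deletion destroys all directed $u\to v$ paths). First I would record that $v$ is at least reachable from $u$: concatenating the given $u\to s$ path with one of the two arc-disjoint $s\to v$ paths produces a directed $u\to v$ walk, hence a $u\to v$ path exists and the minimum $u$-$v$ arc cut has size at least one.

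Now suppose for contradiction that $u$ does \emph{not} two-reach $v$. Then the minimum $u$-$v$ arc cut has size exactly one, so there is a single arc $e=(x,y)$ whose deletion leaves no directed $u\to v$ path. Let $S$ be the set of vertices reachable from $u$ in $D-e$. I would extract the standard structure of a single-arc cut: $u\in S$, $v\notin S$, and $e$ is the \emph{unique} arc of $D$ directed from $S$ to $V\setminus S$. Indeed $x\in S$ and $y\notin S$, and since $v$ is reachable from $u$ in $D$ but not in $D-e$, no arc other than $e$ can leave $S$.

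The heart of the argument is to locate $s$ and $t$ relative to this cut. I claim $s\notin S$: otherwise $s\in S$ while $v\notin S$, so each of the two arc-disjoint $s\to v$ paths must cross from $S$ to $V\setminus S$ and therefore must traverse the unique crossing arc $e$, contradicting their arc-disjointness. The identical reasoning applied to the two arc-disjoint $t\to v$ paths gives $t\notin S$; when $s=t$ this single instance of the argument already suffices. Finally I would play the two given $u$-paths against the cut: since $u\in S$ while $s\notin S$ and $t\notin S$, both the $u\to s$ path and the $u\to t$ path must cross from $S$ to $V\setminus S$, so each of them uses the unique crossing arc $e$, contradicting the hypothesis that they are arc-disjoint. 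This contradiction rules out a single-arc cut, so $u$ two-reaches $v$.

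The step I expect to be the main obstacle is making the Menger characterization fully rigorous in the arc-disjoint directed setting, in particular verifying that a failure of two-reachability genuinely yields a single crossing arc $e$ together with the reachability set $S$ and the uniqueness of $e$ as the arc leaving $S$; once that structural fact is secured, the three crossing contradictions (one from each $2$-reaching hypothesis and one from the arc-disjointness of the $u\to s$ and $u\to t$ paths) are short and symmetric. A minor point to keep in view is the degenerate case $s=t$, which the argument absorbs without modification, since the two arc-disjoint $u\to s$ paths then serve as the required $u\to s$ and $u\to t$ paths.
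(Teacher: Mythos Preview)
Your argument is correct: the single--arc cut $e$ together with the reachability set $S$ is exactly the right structure, and the three crossing contradictions go through as you describe. The degenerate case $s=t$ is indeed absorbed automatically.

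There is, however, nothing in the present paper to compare it to. The paper does not supply a proof of this lemma; it is quoted verbatim from \cite{Gb17} and used as a black box (the appendix proves only Lemma~\ref{LEM: l3}, not Lemma~\ref{LEM: l2}). So your Menger-based proof is a self-contained substitute rather than a variant of anything in the paper. As a minor remark, what you flag as ``the main obstacle'' --- extracting from a failure of two-reachability a single arc $e$ that is the unique arc leaving $S$ --- is routine once stated: if some other arc $f=(a,b)$ with $a\in S$, $b\notin S$ existed, then $b$ would be reachable from $u$ in $D-e$ through $a$, contradicting $b\notin S$. You could safely compress that paragraph.
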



The following lemma is a variable version of Lemma~\ref{LEM: l2} in~\cite{Gb17}, the proof is the same as the one of that lemma, so we omit it here.
\begin{lem}\label{LEM: l3}
Let $v$ be a vertex in a strongly connected digraph $D=(V,\Lambda)$ and let $U$ be the set of vertices that 2-reach $v$. Then for any component $C$ of $D[V\setminus U]$, there is exactly one arc from $C$ to $U$.
\end{lem}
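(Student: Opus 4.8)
The plan is to recast the statement as a cut-counting identity and then exploit submodularity. Write $\lambda(x,v)$ for the maximum number of arc-disjoint directed paths from $x$ to $v$; by Menger's theorem this equals the minimum number of arcs in an $(x,v)$-cut. Since $D$ is strongly connected, $\lambda(x,v)\ge 1$ for every $x$, and by the definition of two-reachability $U=\{x:\lambda(x,v)\ge 2\}$, so $V\setminus U=\{x:\lambda(x,v)=1\}$. For a vertex set $X$ with $v\notin X$ let $d^+(X)$ denote the number of arcs leaving $X$. The first observation is that no arc joins $C$ to any other component of $D[V\setminus U]$, so every arc from $C$ to $V\setminus C$ in fact lands in $U$; hence the quantity we must control is exactly $d^+(C)$, and the claim becomes $d^+(C)=1$. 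That $d^+(C)\ge 1$ is immediate from strong connectivity, since some vertex of $C$ must reach $v\notin C$.

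Call a set $X$ \emph{tight} if $v\notin X$ and $d^+(X)=1$. First I would show that every $w\in C$ lies in a tight set contained in $C$. Indeed $\lambda(w,v)=1$ gives, via Menger, a set $X\ni w$ with $v\notin X$ and $d^+(X)=1$; any vertex of $X\cap U$ would be separated from $v$ by a single arc, contradicting $\lambda(\cdot,v)\ge 2$, so $X\subseteq V\setminus U$. Moreover, since arcs never leave a component of $D[V\setminus U]$ except into $U$, splitting $X$ according to the components of $V\setminus U$ shows that each nonempty piece already contributes at least one out-arc (it must reach $v$), so the budget $d^+(X)=1$ forces $X$ to lie inside the single component $C$ containing $w$.

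The engine is submodularity of $d^+$: for tight $X,Y$ with $X\cap Y\neq\emptyset$ one has $2=d^+(X)+d^+(Y)\ge d^+(X\cup Y)+d^+(X\cap Y)$, and since every nonempty set avoiding $v$ has $d^+\ge 1$, both terms on the right must equal $1$; thus tight sets are closed under union (and intersection) whenever they meet. Consequently the maximal tight subsets $M_1,\dots,M_r$ of $C$ are pairwise disjoint and cover $C$ — overlapping ones would merge, contradicting maximality — so they partition $C$. Summing the unit cuts, $r=\sum_i d^+(M_i)$ counts exactly the arcs leaving the blocks $M_i$, and each such arc either leaves $C$ into $U$ or joins two different blocks $M_i,M_j$. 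Writing $a=d^+(C)$ for the former total and $b$ for the latter gives $a+b=r$. Finally, contracting each $M_i$ in the underlying undirected graph of $D[C]$ yields a connected graph on $r$ nodes with $b$ edges, whence $b\ge r-1$ and therefore $a=r-b\le 1$; combined with $a\ge 1$ this yields $d^+(C)=1$.

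I expect the main obstacle to be the localization step — proving that the unit-cut set supplied by Menger for a vertex of $C$ actually sits inside $C$ rather than straddling $U$ or several components — because this is precisely what keeps the uncrossing within a single component and lets the weak-connectivity count close. This cut-based route runs parallel to the argument for Lemma~\ref{LEM: l2}. Alternatively, one could argue by contradiction, producing from two arcs $C\to U$ a single vertex of $C$ with arc-disjoint paths to two vertices of $U$ and then invoking Lemma~\ref{LEM: l2} directly; but arranging those two paths to be arc-disjoint inside a merely weakly connected $C$ is exactly the difficulty that the submodular argument sidesteps.
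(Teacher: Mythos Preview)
Your argument is correct, but it is not the route the paper takes. The paper does precisely what you describe in your final paragraph as the ``alternative'': it names the tails $v_1,\dots,v_p$ of the arcs from $C$ to $U$, sets $W_k=\{x\in C: x\text{ reaches }v_k\text{ within }C\}$, and shows the $W_k$ are pairwise disjoint. The arc-disjointness difficulty you flagged is dispatched by a one-line device: given $x\in W_i\cap W_j$ with paths $P_i$ to $v_i$ and $P_j$ to $v_j$, let $y$ be the \emph{last} vertex of $P_i$ that also lies on $P_j$; the two path-tails from $y$ are then arc-disjoint and end at heads $u_i,u_j\in U$, so Lemma~\ref{LEM: l2} places $y$ in $U$, a contradiction. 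Once the $W_k$ are disjoint there can be no arc between them (its tail would lie in both), and since they cover $C$, weak connectedness of $C$ forces $p=1$.

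So the paper's proof is shorter and more elementary --- it uses only Lemma~\ref{LEM: l2}, with no Menger and no submodularity --- and the step you judged to be ``exactly the difficulty'' turns out to be the easy part. Your uncrossing route is standard connectivity machinery and has the complementary virtue of never manipulating individual paths inside $C$; the localization step (pushing the Menger cut down into $C$) is the only place needing care, and you handled it correctly. The two proofs are genuinely different in structure: yours partitions $C$ by maximal tight sets and closes with a spanning-tree edge count, while the paper partitions $C$ by reachability basins and closes with Lemma~\ref{LEM: l2}.
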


\begin{rem}
By a component we mean a connected component instead of a strongly-connected component.
\end{rem}




Now we are ready to give the proof of the main theorem.


\begin{proof}[Proof of Theorem~\ref{THM: main}]
Let $G=(V,E)$ be the underlying graph. Let $D_{pr}=(V,\Lambda_{pr})$ be the orientation founded by the SC-PATH-REVERSAL, and let $D_{lex}=(V,\Lambda_{lex})$ be a strongly connected orientation that minimizes the lexicographic order of indegrees among all strongly connected orientations of $G$. Write $d^-_{pr}(v)$ and $d^-_{lex}(v)$ for the indegree of $v$ in $D_{pr}$  and  $D_{lex}$, respectively.
Define
$$\Delta:=\sum_{v\in V}|d^-_{lex}(v)-d^-_{pr}(v)|$$
and $$S:=\{\, v\, |\, d^-_{lex}(v)\neq d^-_{pr}(v) \}.$$
Choose $\Lambda_{lex}$ such that it minimizes $\Delta$.
If $S=\emptyset$, then $\Lambda_{pr}$ is a strongly-connected orientation having the same lexicographic order of indegree as $\Lambda_{lex}$, we are done. So assume $S\neq\emptyset$.
Denote $M_1=\max \{\, d^-_{lex}(v)\, |\, v\in S\}$ and $S_1=\{\, u\in S\, |\, d_{lex}^-(u)=M_1 \}$. Denote $M_2=\max \{\, d^-_{pr}(u)\, |\, u\in S_1\}$.
Choose $v\in S_1$ such that $d_{pr}^-(v)=M_2$.

\vspace{5pt}
\noindent{\bf Case 1:} $d^-_{lex}(v)>d^-_{pr}(v)$.

Let $U$ be the set of vertices that two-reach $v$ in $D_{lex}$. By Lemma~\ref{LEM: l3}, there is exactly one arc from each component of $G[V\setminus U]$ to $U$. Thus, on the one hand,
$$\sum_{u\in U}d^-_{lex}(u)=|E(G[U])|+c(G[V\setminus U]),$$
where $c(G)$ denotes the number of components of a graph $G$.

While, on the other hand, since $D_{pr}$ is strongly connected, there is at least one arc from each component of $G[V\setminus U]$ to $U$. Thus
$$\sum_{u\in U}d^-_{pr}(u)\ge |E(G[U])|+c(G[V\setminus U]).$$
So we get $\sum\limits_{u\in U}d^-_{lex}(u)\le \sum\limits_{u\in U}d^-_{pr}(u)$. Since $d^-_{lex}(v)>d^-_{pr}(v)$, there exists a vertex $w\in U$ such that $d^-_{lex}(w)<d^-_{pr}(w)$.  Clearly, $w\in S$. By the choice of $v$, we have $d^-_{lex}(w)\le d^-_{lex}(v)$.
We claim that $d^-_{lex}(w)< d^-_{lex}(v)$.
  If not, $w\in S_1$. But $d^-_{pr}(w)>d^-_{lex}(w)=d^-_{lex}(v)>d^-_{pr}(v)$, a contradiction to the choice of $v$.
If $d^-_{lex}(w)<d^-_{lex}(v)-1$, then reversing a directed  path from $w$ to $v$ remains strong connectivity by Lemma~\ref{LEM: l1}, but the resulting orientation has a smaller lexicographic order of indegree,
a contradiction to the choice of $\Lambda_{lex}$. Thus $d^-_{lex}(w)=d^-_{lex}(v)-1$. Now, reversing a directed path from $w$ to $v$ in $D_{lex}$, we get another orientation with minimum lexicographic order, and however, with $d^-_{lex}(v)>d^-_{pr}(v)$ and  $d^-_{lex}(w)<d^-_{pr}(w)$ before the reverse, we get a smaller $\Delta$, which contradicts to the choice of $\Lambda_{lex}$, too.

\vspace{5pt}
\noindent{\bf Case 2:} $d^-_{lex}(v)<d^-_{pr}(v)$.

Let $U$ be the set of vertices that 2-reach $v$ in $D_{pr}$. With a similar discussion as in Case 1, we get $\sum\limits_{u\in U}d^-_{lex}(u)\ge \sum\limits_{u\in U}d^-_{pr}(u)$. Since $d^-_{lex}(v)<d^-_{pr}(v)$, there exists a vertex $w\in U$ such that $d^-_{lex}(w)>d^-_{pr}(w)$.
Then we have
$$d^-_{pr}(v)>d^-_{lev}(v)\ge d^-_{lex}(w)>d^-_{pr}(w).$$
This implies that $d^-_{pr}(w)<d^-_{pr}(v)-1$. So there is  a  strongly reversible directed path in $D_{pr}$, which contradicts to the property $D_{pr}$ has no strongly reversible directed path.

We conclude that $S=\emptyset$. Therefore the orientation $\Lambda_{pr}$ found by the SC-PATH-REVERSAL algorithm has the minimum lexicographic order of indegree.
\end{proof}


\appendix
\section{Proof of Lemma~\ref{LEM: l3} }

Let $C$ be a component of $G[V\setminus U]$. Note that there is no arc between $C$ and the other components of $G[V\setminus U]$.
Since $D$ is strongly connected, there is at least one arc from $C$ to $U$. 
Let $v_1,\ldots,v_p$ be the tails of the arcs from  $C$ to $U$ and let $W_k$ be the set of vertices in $C$ that reach $v_k$ for $k=1,\ldots,p$. Note that there is exactly one arc from $v_i$ to $U$ for $i=1,\ldots, p$, otherwise $v_i$ 2-reaches $v$ by Lemma~\ref{LEM: l2}, which is a contradiction to the choice of $U$. So in the following it is sufficient to show that $p=1$. Denote the head vertex of the arc from $v_i$ to $U$ by $u_i$, $i=1, \ldots, p$. 
We first claim that $W_i\cap W_j\neq\emptyset$ for any pair of different $W_i$ and $W_j$. If not, suppose there is $x\in W_i\cap W_j$, then there exists a directed path $P_i$ from $x$ to $v_i$ and a directed path $P_j$ from $x$ to $v_j$ in $C$. 
Let $y$ be the last common vertex in $V(P_i)\cap V(P_j)$ along the direction of $P_i$. Then $y\in W_i\cap W_j$ and there are two arc-disjoint directed paths  from $y$ to $v_i$ and $v_j$ in $C$ and hence from $y$ to $u_i$ and $u_j$ in $G$, respectively. 
By Lemma~\ref{LEM: l2}, $y$ 2-reaches $v$, a contradiction to $y\not\in U$. Thus $W_i\cap W_j=\emptyset$ for any $i\neq j$. The claim also implies that there is no arc between $W_i$ and $W_j$. However, $C$ is connected (not necessarily strongly connected). This forces that $p=1$. The proof is complete. 


\begin{thebibliography}{99}
\bibitem{AMOZ07}
Y. Asahiro, E. Miyano, H. Ono, and K. Zenmyo. Graph orientation algorithms to minimize the maximum outdegree. International Journal of
Foundations of Computer Science, 18(2):197-215, 2007.

\bibitem{Gb17}
G. Borradaile, J. Iglesias, T. Migler, A. Ochoa, G. Wilfong, L. Zhang. Egalitarian Graph Orientations, \emph{J. Graph Algorithms Appl.}, \textbf{21}(2017): 687-708.


\bibitem{FM95}
H. d. Fraysseix and P. O. de Mendez. Regular orientations, arboricity, and augmentation. In Proceedings of the DIMACS International Workshop on
Graph Drawing, GD '94, pages 111-118, London, UK, UK, 1995. Springer-Verlag.


\bibitem{Ven04}
 V. Venkateswaran. Minimizing maximum indegree. Discrete Appl. Math., 143:374-378, 2004.

\end{thebibliography}
\end{document}